\documentclass[11pt,a4paper]{article}
\setlength{\bigskipamount}{5ex plus1.5ex minus 2ex}
\setlength{\textheight}{24.45cm} 
\setlength{\textwidth}{15cm}
\setlength{\hoffset}{-1.2cm}
\setlength{\voffset}{-2.3cm}
\usepackage{pdfsync} 

\usepackage{amsfonts,amsmath,amsthm}
\newtheorem{theorem}{Theorem}
\numberwithin{theorem}{section}
\newtheorem{lemma}{Lemma}
\newtheorem{corollary}{Corollary}

\usepackage{amssymb}
\usepackage{hyperref}
\usepackage{graphicx}
\usepackage{wrapfig}
\usepackage{subfig}
\usepackage{epsfig}
\usepackage{float,mathrsfs}
\usepackage{enumitem}

\usepackage{bbm}

\usepackage{algpseudocode}
\usepackage[font=scriptsize]{caption}
\usepackage{chngcntr} 
\counterwithout{equation}{section}
\counterwithout{figure}{section}
\counterwithout{theorem}{section}

\usepackage{color}

\definecolor{refkey}{rgb}{0.9451,0.2706,0.4941}\definecolor{labelkey}{rgb}{0.9451,0.2706,0.4941}

\definecolor{darkred}{RGB}{139,0,0}
\definecolor{darkgreen}{RGB}{0,100,0}
\definecolor{darkmagenta}{RGB}{139,0,139}

\newcommand{\bsx}{{\boldsymbol{x}}}
\newcommand{\bsm}{{\boldsymbol{m}}}
\newcommand{\bsmu}{{\boldsymbol{\mu}}}
\newcommand{\bsy}{{\boldsymbol{y}}}

\newcommand{\bbN}{{\mathbb{N}}}

\newcommand{\bsnu}{{\boldsymbol{\nu}}}

\newcommand{\bsb}{{\boldsymbol{b}}}

\newcommand{\bsxi}{{\boldsymbol{\xi}}}

\newcommand{\calF}{\mathcal{F}}


\title{Application of dimension truncation error analysis to high-dimensional function approximation in\\uncertainty quantification}

\author{Philipp A.~Guth\footnotemark[2]\and Vesa Kaarnioja\footnotemark[3]}

\renewcommand{\thefootnote}{\fnsymbol{footnote}}

\begin{document}
\maketitle

\begin{abstract}
Parametric mathematical models such as parameterizations of partial differential equations with random coefficients have received a lot of attention within the field of uncertainty quantification. The model uncertainties are often represented via a series expansion in terms of the parametric variables. In practice, this series expansion needs to be truncated to a finite number of terms, introducing a dimension truncation error to the numerical simulation of a parametric mathematical model. There have been several studies of the dimension truncation error corresponding to different models of the input random field in recent years, but many of these analyses have been carried out within the context of numerical integration. In this paper, we study the $L^2$ dimension truncation error of the parametric model problem. Estimates of this kind arise in the assessment of the dimension truncation error for function approximation in high dimensions. In addition, we show that the dimension truncation error rate is invariant with respect to certain transformations of the parametric variables. Numerical results are presented which showcase the sharpness of the theoretical results.
\end{abstract}

\footnotetext[2]{Johann Radon Institute for Computational and Applied Mathematics, Austrian Academy of Sciences, Altenbergerstra{\ss}e 69, A-4040 Linz, Austria, {\tt philipp.guth@ricam.oeaw.ac.at}}
\footnotetext[3]{Department of Mathematics and Computer Science, Free University of Berlin, Arnimallee 6, 14195 Berlin, Germany, {\tt vesa.kaarnioja@fu-berlin.de}}

\renewcommand{\thefootnote}{\arabic{footnote}}

\section{Introduction}

In the field of uncertainty quantification it is common to study mathematical models with uncertain influences parameterized by countably infinite sequences of random variables. Consider, for instance, an abstract model $M\!:X\times U\to Y$ such that
\begin{equation}\label{eq:GuKa_model}
M(g(\bsy),\bsy)=0,
\end{equation}
where $X$ and $Y$ are separable Hilbert spaces and $U$ is a nonempty subset of the infinite-dimensional sequence space of parameters $\mathbb R^{\mathbb N}$. The solution $g(\bsy)\in X$ to~\eqref{eq:GuKa_model} for $\bsy\in U$, if it exists, may be computationally expensive to evaluate. To this end, it may be preferable to instead \emph{approximate} $g$ using a surrogate which is cheap to evaluate and hence enables, e.g., efficient sampling of the (approximated) solution.

Some possible surrogate models include, but are not limited to, Gaussian process regression~\cite{GPR1}, reduced basis approaches~\cite{ROM2,ROM1}, generalized polynomial chaos expansions \cite{GPC1,GPC2}, neural network approximations~\cite{NN4,NN2,NN3,NN1}, and kernel interpolation based on lattice point sets~\cite{kkkns22,Kernel1,Kernel2}. The results presented in this paper are particularly well-suited to the analysis of kernel methods used in conjunction with the so-called \emph{periodic model} discussed in~\cite{hhkks22,kkkns22,kks20}, and we will devote a section of this paper to explore the application of our dimension truncation results to the periodic model of uncertainty quantification. 

The construction of a numerical surrogate is often based on collocating the target function over a cubature point set, such as Monte Carlo or quasi-Monte Carlo nodes. Therefore a natural first step for the numerical treatment of~\eqref{eq:GuKa_model} is the approximation by a dimensionally-truncated model $M_s\!:X\times U_s\to Y$ such that
\begin{equation*}
M_s(g_s(\bsy_{\leq s}),\bsy_{\leq s})=0,%
\end{equation*}
where $\varnothing\neq U_s\subseteq \mathbb R^s$ and $g_s(\bsy_{\leq s})\in X$ for all $\bsy_{\leq s}\in U_s$.  Consider the problem of finding a surrogate solution $g_{s,n}:=A_n(g_s)$ using an algorithm $A_n$ which uses $n$ point evaluations of the $s$-dimensional function $g_s$, where the surrogate belongs to $X$ such that
$$
\|g_s-g_{s,n}\|_{L_{\bsmu}^2(U;X)} \xrightarrow{n\to\infty}0
$$
with some known convergence rate and ${\bsmu}$ indicating a probability measure on $U$. The total error of the approximation obtained in this fashion can be estimated using the triangle inequality
$$
\|g-g_{s,n}\|_{L_{\bsmu}^2(U;X)}\leq \|g-g_s\|_{L_{\bsmu}^2(U;X)}+\|g_s-g_{s,n}\|_{L_{\bsmu}^2(U;X)}.
$$
In this paper we focus on the first term---the \emph{dimension truncation error}---which is independent of the chosen approximation scheme $A_n$.

Dimension truncation error rates are typically studied for problems involving partial differential equations (PDEs) with random inputs. For integration problems a dimension truncation rate is derived in~\cite{kss12} for the source problem with an affine parameterization of the diffusion coefficient. This rate was then improved by \cite{gantner} in the generalized context of affine parametric operator equations. Dimension truncation has also been studied for coupled PDE systems arising in optimal control problems under uncertainty~\cite{guth21}, in the context of the periodic model of uncertainty quantification for both numerical integration~\cite{kks20} and kernel interpolation~\cite{kkkns22}, as well as for Bayesian inverse problems governed by PDEs~\cite{gantner2,hks21}. The results in these papers have been proved using Neumann series, which is known to work well in the affine parametric setting, but may lead to suboptimal results if the problem depends nonlinearly on the parameters.

The use of Taylor series in the assessment of dimension truncation error rates has previously been considered by~\cite{matern1} within the context of elliptic PDEs equipped with lognormal random input fields. In the non-affine setting, using Taylor series makes it possible to derive dimension truncation error rates by exploiting the parametric regularity of the problem, whereas the Neumann series approach relies fundamentally on the affine parametric structure of the model. The Taylor series approach has been applied in \cite{GGKSS2019}, and motivated the authors in \cite{guth22} and \cite{gk22} to derive dimension truncation error rates for sufficiently smooth, Banach space valued integrands, and with parameters following a generalized $\beta$-Gaussian distribution. An overview of the various dimension truncation error bounds studied in the literature is given in Table~\ref{table:glossary}.

\begin{table}[!t]
\begin{center}
\begin{tabular}{l|l|l}
 &Integration&Function approximation\\
\hline Affine parametric&\cite{gantner,kss12}  &\cite{kkkns22}\\
operator equation setting&rate $\mathcal O(s^{-\frac{2}{p}+1})$&rate $\mathcal O(s^{-\frac{1}{p}+\frac12})$\\
\hline Non-affine parametric&\cite{GGKSS2019,gk22}&this paper\\
operator equation setting&rate $\mathcal O(s^{-\frac{2}{p}+1})$&rate $\mathcal O(s^{-\frac{1}{p}+\frac12})$
\end{tabular}
\end{center}
\caption{An overview of various dimension truncation results.}\label{table:glossary}
\end{table}

This paper is structured as follows. Subsection 1.1 introduces the multi-index notation used throughout the paper. The problem setting is introduced in Section 2, including the central assumptions for the ensuing dimension truncation analysis. Section 3 contains
the $L^2$ dimension truncation theorem for Hilbert space valued functions, and in Section 4 we discuss the invariance of the dimension truncation rate under certain transformations of the variables. Numerical experiments assessing the sharpness of our theoretical results are presented in Section 5. The paper ends with some conclusions in Section~6.

\subsection{Notations and preliminaries}
Throughout this paper, boldfaced symbols are used to denote multi-indices while the subscript notation $m_j$ is used to refer to the $j$-th component of multi-index $\bsm$. 
Let
$$
\mathcal F:=\{\bsm\in \mathbb N_0^{\mathbb N}:|\bsm|<\infty\}
$$
denote the set of finitely supported multi-indices, where the order of multi-index $\bsm$ is defined as
$$
|\bsm|:=\sum_{j\geq 1}m_j.
$$
Moreover, we denote
$$
|\bsm|_\infty := \max_{j\geq 1}m_j,
$$
and, for any sequence $\bsx := (x_j)_{j =1}^{\infty}$ of real numbers and $\bsm \in \calF$, we define
$$
\bsx^{\bsm} := \prod_{j\geq 1} x_j^{m_j},
$$
where we use the convention $0^0 := 1$.

\section{Problem setting}

Let $X$ be a real separable Hilbert space, $U:=[-\tfrac12,\tfrac12]^{\mathbb N}$ a set of parameters, and suppose that $g(\bsy)\in X$ is a parameterized family of functions with smooth dependence on $\bsy\in U$. We define $g_s(\bsy) := g(\bsy_{\leq s},\boldsymbol{0}) := g(y_1,\ldots,y_s,0,0,\ldots)$ and assume that $\bsmu(\mathrm d\bsy) := \bigotimes_{j\geq1}\mu(\mathrm dy_j)$ is a countable product probability measure, i.e., $\bsmu(U) = 1$.
We suppose that
\begin{enumerate}
\item \label{assumpA1}For $\bsmu$-a.e.~$\bsy\in U$, there holds
$$
\|g(\bsy)-g_s(\bsy)\|_X\xrightarrow{s\to\infty}0.
$$
\item \label{assumpA2}Let $(\Theta_k)_{k\geq 0}$ and $\bsb:=(b_j)_{j\ge 1}$ be sequences of nonnegative numbers such that $\bsb\in \ell^p(\mathbb N)$ for some $p\in (0,1)$ and $b_1\geq b_2\geq\cdots$. Suppose that $g$ is continuously differentiable up to order $k+1$, with
$$
\|\partial^{\bsnu}g(\bsy)\|_X\leq \Theta_{|\bsnu|}\bsb^{\bsnu}
$$
for all $\bsy \in U$ and for all $\bsnu\in\mathcal F_k := \{\bsnu \in \bbN_0^{\bbN}:|\bsnu|\leq k+1\}$, where $k:=\lceil \frac{1}{1-p}\rceil$.
\item \label{assumpA3}There holds $\int_{-1/2}^{1/2}y_j\,\mu({\rm d}y_j)=0$ and there exists a constant $C_{\mu}\geq 0$ such that $\int_{-1/2}^{1/2}|y_j|^k\,\mu({\rm d}y_j)\leq C_\mu$ for all $k\geq 2$.
\end{enumerate}

\emph{Remark.} Certain holomorphic functions admit regularity bounds of the form stated in Assumption~\ref{assumpA2} (cf., e.g., \cite[Proposition~2.3]{NN1}). If $g$ is a holomorphic parametric map, then the $\bsb$ sequence controls the radii of the domains of analytic continuation, with $p$ related to the rate of decay of the $\bsnu$-th partial derivative of $g$ (see \cite{NN1} and references therein). The smaller the value of $p$, the faster the decay rate, which will be reflected in the dimension truncation error rates in Theorem~\ref{thm:1} and Corollary~\ref{cor:1}. Especially, solutions to elliptic PDEs with random diffusion coefficients fall into this framework.

If Assumption~\ref{assumpA2} holds, then we infer that $\bsy \mapsto G(g(\bsy))$ for all $G\in X'$ is continuous as a composition of continuous mappings. Hence $\bsy \mapsto G(g(\bsy))$ is measurable for all $G \in X'$, i.e., $\bsy \mapsto g(\bsy)$ is weakly measurable. Since $X$ is assumed to be a separable Hilbert space, by Pettis' theorem (cf., e.g., \cite[Chapter 4]{Yosida}) we obtain that $\bsy \mapsto g(\bsy)$ is strongly measurable. The upper bound in Assumption~\ref{assumpA2} is $\bsmu$-integrable. Thus we conclude from Bochner's theorem (cf., e.g., \cite[Chapter 5]{Yosida}) and Assumption~\ref{assumpA2} that $g$ is $\bsmu$-integrable over $U$.

Furthermore, $\bsmu$-a.e.~equality defines an equivalence relation among strongly $\bsmu$-measurable functions. 
By $L^2_{\bsmu}(U;X)$ we denote the Hilbert space of equivalence classes of strongly $\bsmu$-measurable functions $f:U\to X$ with norm
\begin{align*}
    \|f\|_{L^2_{\bsmu}(U;X)} := \bigg(\int_U \|f(\bsy)\|_X^2\,\bsmu(\mathrm d\bsy)\bigg)^{\frac12} <\infty. 
\end{align*}
Moreover, under the Assumptions~\ref{assumpA1} and~\ref{assumpA2} it can be shown that $g,g_s \in L^2_{\bsmu}(U;X)$ and $$
\lim_{s\to \infty} \|g(\bsy) - g(\bsy_{\leq s},\boldsymbol{0})\|_{L_{\bsmu}^2(U;X)} = \lim_{s\to \infty} \bigg(\int_{U} \|g(\bsy) - g(\bsy_{\leq s},\boldsymbol{0})\|_X^2\,\bsmu(\mathrm d\bsy)\bigg)^{\frac12} = 0,
$$
by applying Lebesgue's dominated convergence theorem (see, e.g., \cite[Theorem 1]{infiniteintegration} and \cite[Section 26]{Halmos}) to 
$$F^s(\bsy) := \|g(\bsy) - g(\bsy_{\leq s},\boldsymbol{0})\|_X^2,$$
which converges $\bsmu$-a.e.~to zero by Assumption~\ref{assumpA1}, and can be bounded by $(2\Theta_{0})^2$ by Assumption~\ref{assumpA2}. We use the superscript to avoid confusion with the notation used to denote dimensionally-truncated functions elsewhere in the document.

\section{Dimension truncation error}
We will require the following parametric regularity bound for the main dimension truncation result.
\begin{lemma}\label{lemma:regularity}
Under Assumption~\ref{assumpA2}, there holds
$$
|\partial^{\bsnu}\|g(\bsy)-g_s(\bsy)\|_X^2|\leq \bigg(\max_{0\leq \ell\leq |\bsnu|}\frac{2\Theta_\ell}{\ell!}\bigg)^2 (|\bsnu|+1)!\bsb^{\bsnu}\quad\text{for all}~\bsnu\in\mathcal F_k~\text{and}~\bsy\in U.
$$
\end{lemma}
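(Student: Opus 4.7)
The plan is to expand the squared norm as an inner product, apply the multi-index Leibniz rule, then use Cauchy--Schwarz together with Assumption~\ref{assumpA2} to reduce matters to a combinatorial sum that I can evaluate in closed form.

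First, I would write $\|g(\bsy)-g_s(\bsy)\|_X^2 = \langle g(\bsy)-g_s(\bsy),g(\bsy)-g_s(\bsy)\rangle_X$ and apply the multi-index Leibniz rule to obtain
$$
\partial^{\bsnu}\|g-g_s\|_X^2 = \sum_{\bsalpha\leq \bsnu}\binom{\bsnu}{\bsalpha}\big\langle \partial^{\bsalpha}(g-g_s),\partial^{\bsnu-\bsalpha}(g-g_s)\big\rangle_X.
$$
Cauchy--Schwarz then yields $|\partial^{\bsnu}\|g-g_s\|_X^2|\leq \sum_{\bsalpha\leq\bsnu}\binom{\bsnu}{\bsalpha}\|\partial^{\bsalpha}(g-g_s)\|_X\|\partial^{\bsnu-\bsalpha}(g-g_s)\|_X$.

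Next I need the pointwise bound $\|\partial^{\bsalpha}(g-g_s)\|_X\leq 2\Theta_{|\bsalpha|}\bsb^{\bsalpha}$. Since $g_s(\bsy)=g(\bsy_{\leq s},\bszero)$, the chain rule shows that $\partial^{\bsalpha}g_s$ vanishes whenever $\bsalpha$ has a nonzero component with index $>s$, and otherwise equals $(\partial^{\bsalpha}g)(\bsy_{\leq s},\bszero)$; either way the uniform bound in Assumption~\ref{assumpA2} gives $\|\partial^{\bsalpha}g_s\|_X\leq \Theta_{|\bsalpha|}\bsb^{\bsalpha}$, and the triangle inequality then yields the claimed bound.

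Setting $M:=\max_{0\leq \ell\leq |\bsnu|}\Theta_{\ell}/\ell!$ and using $\Theta_{|\bsalpha|}\Theta_{|\bsnu-\bsalpha|}\leq M^2\,|\bsalpha|!\,|\bsnu-\bsalpha|!$, the task reduces to evaluating the combinatorial sum
$$
S(\bsnu) := \sum_{\bsalpha\leq \bsnu}\binom{\bsnu}{\bsalpha}|\bsalpha|!\,|\bsnu-\bsalpha|!.
$$
I would group by $\ell=|\bsalpha|$ and invoke the identity $\sum_{|\bsalpha|=\ell,\ \bsalpha\leq \bsnu}\binom{\bsnu}{\bsalpha}=\binom{|\bsnu|}{\ell}$, which follows from equating coefficients of $x^{\ell}$ in $\prod_j(1+x)^{\nu_j}=(1+x)^{|\bsnu|}$. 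Then
$$
S(\bsnu)=\sum_{\ell=0}^{|\bsnu|}\ell!(|\bsnu|-\ell)!\binom{|\bsnu|}{\ell}=\sum_{\ell=0}^{|\bsnu|}|\bsnu|! = (|\bsnu|+1)!,
$$
and combining the steps gives the stated inequality. The only even mildly delicate point is the Leibniz/Cauchy--Schwarz step (verifying that derivatives distribute over the inner product), plus recognizing the multinomial identity that collapses $S(\bsnu)$; the rest is routine.
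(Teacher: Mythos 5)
Your proposal is correct and follows essentially the same route as the paper's proof: Leibniz rule on the inner product, Cauchy--Schwarz, the triangle-inequality bound $\|\partial^{\bsalpha}(g-g_s)\|_X\le 2\Theta_{|\bsalpha|}\bsb^{\bsalpha}$, and the Vandermonde convolution collapsing the combinatorial sum to $(|\bsnu|+1)!$. Your explicit justification that $\partial^{\bsalpha}g_s$ either vanishes or equals $(\partial^{\bsalpha}g)(\bsy_{\leq s},\bszero)$ is a small added detail the paper leaves implicit, but the argument is otherwise identical.
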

\begin{proof}Let $\bsnu\in\mathcal F_k$. We apply the Leibniz product rule with respect to the inner product of the Hilbert space $X$ to obtain
\begin{align*}
\partial^{\bsnu}\|g(\bsy)-g_s(\bsy)\|_X^2&=\partial^{\bsnu}\langle g(\bsy)-g_s(\bsy),g(\bsy)-g_s(\bsy)\rangle_X\\
&=\sum_{\bsm\leq\bsnu}\binom{\bsnu}{\bsm}\langle \partial^{\bsm}(g(\bsy)-g_s(\bsy)),\partial^{\bsnu-\bsm}(g(\bsy)-g_s(\bsy))\rangle_X.
\end{align*}
Using the Cauchy--Schwarz inequality together with Assumption~\ref{assumpA2} yields
\begin{align*}
|\partial^{\bsnu}\|g(\bsy)-g_s(\bsy)\|_X^2|&\leq \sum_{\bsm\leq\bsnu}\binom{\bsnu}{\bsm}\|\partial^{\bsm}(g(\bsy)-g_s(\bsy))\|_X\|\partial^{\bsnu-\bsm}(g(\bsy)-g_s(\bsy))\|_X\\
&\leq 4 \sum_{\bsm\leq\bsnu}\binom{\bsnu}{\bsm}\Theta_{|\bsm|}\bsb^{\bsm}\Theta_{|\bsnu|-|\bsm|}\bsb^{\bsnu-\bsm}\\
&=4\bsb^{\bsnu}\sum_{\ell=0}^{|\bsnu|}\Theta_{\ell}\Theta_{|\bsnu|-\ell}\sum_{\substack{|\bsm|=\ell\\ \bsm\leq \bsnu}}\binom{\bsnu}{\bsm}\\
&=4\bsb^{\bsnu}\sum_{\ell=0}^{|\bsnu|}\Theta_{\ell}\Theta_{|\bsnu|-\ell}\frac{|\bsnu|!}{\ell!(|\bsnu|-\ell)!}\\
&\leq 4\bigg(\max_{0\leq \ell\leq |\bsnu|}\frac{\Theta_\ell}{\ell!}\bigg)^2 (|\bsnu|+1)!\bsb^{\bsnu},
\end{align*}
where we used the Vandermonde convolution $\sum_{\substack{|\bsm|=\ell\\ \bsm\leq\bsnu}}\binom{\bsnu}{\bsm}=\binom{|\bsnu|}{\ell}=\frac{|\bsnu|!}{\ell!(|\bsnu|-\ell)!}$.
\end{proof}
The main result of this document is stated below.
\begin{theorem}\label{thm:1}
Let $g(\bsy)\in X$, $\bsy\in U$, satisfy Assumptions~\ref{assumpA1}--\ref{assumpA3}. Then
$$
\|g-g_s\|_{L_{\boldsymbol\mu}^2(U;X)}=\mathcal O(s^{-\frac{1}{p}+\frac12}),
$$
where the implied coefficient is independent of $s$.
\end{theorem}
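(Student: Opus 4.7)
The plan is to Taylor expand $F(\bsy):=\|g(\bsy)-g_s(\bsy)\|_X^2$ in the tail variables around $\bsy_{>s}=\bszero$, integrate against $\bsmu$, exploit the vanishing-first-moment condition from Assumption~\ref{assumpA3}, and finally control the resulting multi-index sum via the Stechkin-type estimate for $\bsb\in\ell^p$. Since $g_s(\bsy)=g(\bsy_{\leq s},\bszero)$, the function $F$ vanishes identically on the slice $\{\bsy_{>s}=\bszero\}$. Applying the one-variable Taylor theorem with integral remainder to $t\mapsto F(\bsy_{\leq s}, t\bsy_{>s})$ on $[0,1]$, expanded to order $k$, and computing the $t$-derivatives by the multivariate chain rule yields
$$
F(\bsy)=\sum_{\substack{\bsnu\in\calF,\,\mathrm{supp}(\bsnu)\subseteq\{s+1,s+2,\ldots\}\\ 1\le|\bsnu|\le k}}\frac{\bsy^{\bsnu}}{\bsnu!}\,\partial^{\bsnu}F(\bsy_{\leq s},\bszero)+R_{k+1}(\bsy).
$$
Integrating over $U$ and using Fubini together with the product structure of $\bsmu$, each summand factorizes as $\prod_{j>s}\int y_j^{\nu_j}\mu(\mathrm dy_j)$ times a function of $\bsy_{\leq s}$; by Assumption~\ref{assumpA3} the product vanishes whenever some $\nu_j=1$, so only multi-indices with $\nu_j\in\{0\}\cup\{2,3,\ldots\}$---and in particular $|\bsnu|\ge 2$---survive.

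For the surviving terms I would combine Lemma~\ref{lemma:regularity}, which gives $|\partial^{\bsnu}F(\bsy_{\leq s},\bszero)|\le M(|\bsnu|+1)!\bsb^{\bsnu}$, with the bound $|\int y_j^{\nu_j}\mu(\mathrm dy_j)|\le C_\mu$ for $\nu_j\ge 2$, thereby bounding the main contribution by $M\sum\frac{(|\bsnu|+1)!}{\bsnu!}C_\mu^{|\mathrm{supp}(\bsnu)|}\bsb^{\bsnu}$ over the surviving index set. I would then group by support size $q=|\mathrm{supp}(\bsnu)|$ and substitute $\nu_j=2+\eta_j$ for $j\in\mathrm{supp}(\bsnu)$ with $\eta_j\ge 0$. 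Because $|\bsnu|\le k$ and $\sum_{\bseta}\bsb^{\bseta}/\bseta!\le e^{\|\bsb\|_{\ell^1}}<\infty$ (finite, since $\bsb\in\ell^p\subset\ell^1$ for $p<1$), the sum over $\bseta$ contributes only a constant factor, reducing the estimate to
$$
\sum_{q\ge 1}(C')^q\sum_{\substack{|S|=q\\ S\subseteq\{s+1,\ldots\}}}\prod_{j\in S}b_j^2\le\sum_{q\ge 1}\frac{(C')^q}{q!}\bigg(\sum_{j>s}b_j^2\bigg)^{q}.
$$
The Stechkin-type bound $b_j\le\|\bsb\|_{\ell^p}j^{-1/p}$ combined with $\sum_{j>s}b_j^2\le b_{s+1}^{2-p}\|\bsb\|_{\ell^p}^p$ yields $\sum_{j>s}b_j^2=\calO(s^{1-2/p})$, and the leading $q=1$ term dominates.

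For the remainder I pass absolute values inside, apply Lemma~\ref{lemma:regularity} at $|\bsnu|=k+1$ with the crude bound $|\bsy^{\bsnu}|\le(1/2)^{k+1}$, and collapse the multi-index sum via the identity $\sum_{|\bsnu|=k+1,\,\mathrm{supp}\subseteq\{s+1,\ldots\}}\frac{(k+1)!}{\bsnu!}\bsb^{\bsnu}=(\sum_{j>s}b_j)^{k+1}$. Since $\sum_{j>s}b_j=\calO(s^{1-1/p})$ by the same Stechkin argument, the choice $k=\lceil 1/(1-p)\rceil$ guarantees $(k+1)(1-1/p)\le 1-2/p$, so the remainder is also $\calO(s^{1-2/p})$. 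Combining yields $\|g-g_s\|_{L_{\bsmu}^2(U;X)}^2=\calO(s^{1-2/p})$, and taking square roots gives the claim. The principal obstacle is the bookkeeping in the main sum: it is essential to fully exploit the $\nu_j\ge 2$ restriction, because the naive bound $(\sum_{j>s}b_j)^2=\calO(s^{2-2/p})$ would yield a strictly worse rate and spoil the theorem.
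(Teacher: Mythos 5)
Your proposal is correct and follows essentially the same route as the paper's proof: Taylor expansion of $\|g(\bsy)-g_s(\bsy)\|_X^2$ about $(\bsy_{\leq s},\bszero)$ with integral remainder, annihilation of the order-one tail terms via the zero-mean condition in Assumption~\ref{assumpA3}, the parametric regularity bound of Lemma~\ref{lemma:regularity}, and Stechkin-type estimates with the crucial $\nu_j\geq 2$ restriction giving $\sum_{j>s}b_j^2=\mathcal O(s^{1-2/p})$ for the main term and $(\sum_{j>s}b_j)^{k+1}$ for the remainder. Your grouping by support size with the substitution $\nu_j=2+\eta_j$ is just an expanded form of the paper's bound $-1+\prod_{j>s}(1+\beta_k b_j^2)\leq\sum_{q\geq 1}\frac{1}{q!}(\beta_k\sum_{j>s}b_j^2)^q$, and using $|y_j|\leq\tfrac12$ in place of $C_\mu$ for the remainder is an inessential variation.
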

\begin{proof}
Let $s\geq 1$ and define
$$
F^{{s}}(\bsy):=\|g(\bsy)-g_s(\bsy)\|_X^2\quad\text{for}~\bsy\in U.
$$
In the special case of the uniform distribution $\bsmu({\rm d}\bsy)={\rm d}\bsy$, we can apply~\cite[Theorem~4.2]{gk22} to obtain
$$
\|g-g_s\|_{L^2(U;X)}^2=\bigg|\int_U(F^s(\bsy)-F^s(\bsy_{\leq s},\mathbf 0))\,{\rm d}\bsy\bigg|=\mathcal O(s^{-\frac{2}{p}+1}),
$$
from which the claim follows. For completeness, we present the proof below for the probability measure $\bsmu$ and because parts of the argument will also be useful to establish the invariance of the dimension truncation rate in Section~4.

By developing the Taylor expansion of $F^s$ about $(\bsy_{\leq s},\mathbf 0)$ with integral remainder and observing that $F^s(\bsy_{\leq s},\mathbf 0)=0$, we obtain 
\begin{align}
\begin{split}
F^s(\bsy)&=\sum_{\ell=1}^k \sum_{\substack{|\bsnu|=\ell\\ \nu_j=0~\forall j\leq s}}\frac{\bsy^{\bsnu}}{\bsnu!}\partial^{\bsnu}F^s(\bsy_{\leq s},\mathbf 0)\\
&\quad + \sum_{\substack{|\bsnu|=k+1\\ \nu_j=0~\forall j\leq s}}\frac{k+1}{\bsnu!}\bsy^{\bsnu}\int_0^1 (1-t)^k \partial^{\bsnu}F^s(\bsy_{\leq s},t\bsy_{>s})\,{\rm d}t,
\end{split}\label{eq:taylorexp}
\end{align}
where $\bsy_{>s}:=(y_j)_{j>s}$. Integrating both sides over $\bsy\in U$ yields
\begin{align*}
\int_U F^s(\bsy)\,\bsmu({\rm d}\bsy)&=\sum_{\ell=1}^k \sum_{\substack{|\bsnu|=\ell\\ \nu_j=0~\forall j\leq s}}\frac{1}{\bsnu!}\int_U \bsy^{\bsnu}\partial^{\bsnu}F^s(\bsy_{\leq s},\mathbf 0)\,\bsmu({\rm d}\bsy)\\
&\quad +\sum_{\substack{|\bsnu|=k+1\\ \nu_j=0~\forall j\leq s}}\frac{k+1}{\bsnu!}\int_U \int_0^1 (1-t)^k \bsy^{\bsnu}\partial^{\bsnu}F^s(\bsy_{\leq s},t\bsy_{>s})\,{\rm d}t\,\bsmu({\rm d}\bsy).
\end{align*}
If $\bsnu\in\mathcal F_k$ is such that $\nu_j=1$ for any $j>s$, then Fubini's theorem together with Assumption~\ref{assumpA3} imply for the summands appearing in the first term that
$$
\int_U \bsy^{\bsnu}\partial^{\bsnu}F^s(\bsy_{\leq s},\mathbf 0)\,\bsmu({\rm d}\bsy)=\underset{=0}{\underbrace{\bigg(\prod_{j>s}\int_{-\frac12}^{\frac12} y_j^{\nu_j}\,\mu({\rm d}y_j)\bigg)}}\int_{[-\frac12,\frac12]^s}\partial^{\bsnu}F^s(\bsy_{\leq s},\mathbf 0)\,\bsmu({\rm d}\bsy_{>s}).
$$
Therefore all multi-indices with any component equal to 1 can be removed from the first sum (especially, we can omit all multi-indices with $|\bsnu|=1$). Further, applying the regularity bound proved in Lemma~\ref{lemma:regularity} and writing open the definition of $F^s$ yields
\begin{align}
\begin{split}
\int_U \|g(\bsy)-g_s(\bsy)\|_X^2\,\bsmu({\rm d}\bsy)&\leq C_\mu^k\bigg(\max_{0\leq\ell\leq k}\frac{2\Theta_\ell}{\ell!}\bigg)^2(k+1)!\sum_{\ell=2}^k \sum_{\substack{|\bsnu|=\ell\\ \nu_j=0~\forall j\leq s\\ \nu_j\neq 1~\forall j>s}}\bsb^{\bsnu}\\
&\quad +C_\mu^{k+1}\bigg(\max_{0\leq\ell\leq k+1}\frac{2\Theta_\ell}{\ell!}\bigg)^2(k+2)!\sum_{\substack{|\bsnu|=k+1\\ \nu_j=0~\forall j\leq s}}\frac{1}{\bsnu!}\bsb^{\bsnu},
\end{split}\label{eq:splitsum}
\end{align}
where we used $\int_0^1 (1-t)^k\,{\rm d}t=\frac{1}{k+1}$ and Assumption~\ref{assumpA3}. The second term in~\eqref{eq:splitsum} can be estimated from above using the multinomial theorem in conjunction with Stechkin's lemma (cf., e.g., \cite[Lemma 3.3]{KressnerTobler}):
$$
\sum_{\substack{|\boldsymbol \nu|=k+1\\ \nu_j=0~\forall j\leq s}}\frac{1}{\bsnu!}{\bsb}^{\boldsymbol \nu}\leq \sum_{\substack{|\boldsymbol \nu|=k+1\\ \nu_j=0~\forall j\leq s}}\frac{|\boldsymbol \nu|!}{\boldsymbol \nu!}{\bsb}^{\boldsymbol \nu}=\bigg(\sum_{j>s}b_j\bigg)^{k+1}\leq s^{(k+1)(-\frac{1}{p}+1)}\bigg(\sum_{j\geq 1}{b}_j^p\bigg)^{\frac{k+1}{p}}.
$$
On the other hand, the first term in~\eqref{eq:splitsum} can be estimated similarly to~\cite{gantner}:
\begin{align*}
    &\sum_{\substack{2\leq |\boldsymbol \nu|\leq k\\ \nu_j=0~\forall j\leq s\\ \nu_j\neq 1~\forall j> s}}{\bsb}^{\boldsymbol \nu} \leq \sum_{\substack{0\neq |\boldsymbol{\nu}|_{\infty}\leq k\\ \nu_j=0~\forall j\leq s\\ \nu_j\neq 1~\forall j> s}}{\bsb}^{\boldsymbol \nu} 
    = -1 + \prod_{j>s} \bigg(1+ \sum_{\ell = 2}^{k} {b}_j^\ell\bigg) 
    = -1 + \prod_{j>s} \bigg(1+ {b}_j^2 \sum_{\ell = 0}^{k-2} {b}_j^\ell\bigg) \\
    &\leq -1 + \prod_{j>s} \bigg(1+ {b}_j^2 \underbrace{\sum_{\ell = 0}^{k-2} {b}_1^\ell}_{=: \beta_k}\bigg) 
    \leq  -1 + \exp{\Big(\beta_k \sum_{j> s} {b}_j^2\Big)} = \sum_{\ell \geq 1} \frac{1}{\ell!}\Big(\beta_k \sum_{j> s} {b}_j^2\Big)^{\ell}.
\end{align*}
Using $\sum_{j>s} b_j^2 \leq s^{-\frac{2}{p}+1} (\sum_{j\geq 1}{b}_j^p)^{\frac{2}{p}}$, which follows from  Stechkin's lemma, we further estimate
\begin{align*}
    \sum_{\ell \geq 1} \frac{1}{\ell!}\Big(\beta_k \sum_{j> s} {b}_j^2\Big)^{\ell} \leq s^{-\frac{2}{p}+1} \sum_{\ell \geq 1} \frac{1}{\ell!} (\beta_k \|{\bsb}\|_p^{2})^\ell = s^{-\frac{2}{p}+1} (-1 + \exp(\beta_k \|{\bsb}\|_p^2)
\end{align*}
since $s^{-\frac{2}{p}+1} \geq (s^{-\frac{2}{p}+1})^{\ell}$ for all $\ell \geq 1$.

Altogether, the above discussion yields the bound
$$
\|g(\bsy)-g_s(\bsy)\|_{L^2_{\bsmu}(U;X)}^2=\int_U \|g(\bsy)-g_s(\bsy)\|_X^2\,\bsmu({\rm d}\bsy)=\mathcal O(s^{-\frac{2}{p}+1}+s^{(k+1)(-\frac{1}{p}+1)}),
$$
where the implied coefficient is independent of $s$. Since we assumed that $k=\lceil \frac{1}{1-p}\rceil$, the assertion follows by taking the square root on both sides.
\end{proof}

\section{Invariance of the dimension truncation rate under transformations of variables}
An interesting consequence of the Taylor series argument used in Theorem~\ref{thm:1} is that the \emph{dimension truncation rate remains invariant under certain transformations of the variables}. This has been previously observed in the context of dimension truncation for integration problems under the periodic model~\cite{hhkks22}. To make this notion precise, let us consider a mapping $\bsxi\!:U\to U$, $\bsxi(\bsy):=(\xi(y_1),\xi(y_2),\ldots)$, which satisfies the following conditions:
\begin{enumerate}\setcounter{enumi}{3}
    \item\label{assumpA4} There hold $\xi(0)=0$ and $\int_{-1/2}^{1/2} \xi(y)\,\mu({{\rm d}y)}=0$.
    \item\label{assumpA5} There exists $C_\xi\geq 0$ such that $\int_{-1/2}^{1/2} |\xi(y)|^k\,\mu({\rm d}y)\leq C_\xi$ for all $k\geq 2$.
\end{enumerate}
Then we obtain the following as a consequence of Theorem~\ref{thm:1}.
\begin{corollary}\label{cor:1} Let $g(\bsy)\in X$, $\bsy\in U$, satisfy Assumptions~\ref{assumpA1}--\ref{assumpA3} and let $\bsxi\!:U\to U$ satisfy Assumptions~\ref{assumpA4}--\ref{assumpA5}. Define the $\bsxi$-transformed function $g_{\bsxi}$ by
$$
g_{\bsxi}(\bsy):=g(\bsxi(\bsy)),\quad \bsy\in U,
$$
and its dimension truncation by $g_{\bsxi,s}(\bsy):=g_\bsxi(\bsy_{\leq s},\mathbf 0)$ for $\bsy\in U$. Then
$$
\|g_{\bsxi}-g_{\bsxi,s}\|_{L_{\boldsymbol\mu}^2(U;X)}=\mathcal O(s^{-\frac{1}{p}+\frac12}),
$$
where the implied coefficient is independent of $s$.
\end{corollary}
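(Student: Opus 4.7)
The approach will mirror the proof of Theorem~\ref{thm:1}, with the key modification that the Taylor expansion is carried out in terms of the transformed argument $\bsxi(\bsy)$ rather than $\bsy$ itself. This is the crucial conceptual point: a direct Taylor expansion in $\bsy$ of
$$
F_\bsxi^s(\bsy) := \|g_\bsxi(\bsy) - g_{\bsxi,s}(\bsy)\|_X^2
$$
would require smoothness of $\xi$---which is not assumed---whereas expanding $F^s$ in its natural argument sidesteps the issue entirely, since Lemma~\ref{lemma:regularity} already supplies all of the regularity needed for $F^s$ itself.

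I would first observe that the identity $\xi(0)=0$ from Assumption~\ref{assumpA4} yields $g_{\bsxi,s}(\bsy) = g(\xi(y_1),\ldots,\xi(y_s),0,0,\ldots)$, so $F_\bsxi^s(\bsy) = F^s(\bsxi(\bsy))$ with $F^s$ as in the proof of Theorem~\ref{thm:1}, and in particular $F_\bsxi^s(\bsy_{\leq s},\mathbf 0)=0$. Next, I would apply the Taylor expansion~\eqref{eq:taylorexp} to $F^s$ at the argument $\bsxi(\bsy)$, with expansion center $(\xi(y_1),\ldots,\xi(y_s),\mathbf 0)$, which is a zero of $F^s$. Convexity of $U$ together with $\bsxi(U)\subseteq U$ guarantees that the straight-line path between these two points stays inside $U$, so Lemma~\ref{lemma:regularity} remains applicable at every intermediate argument arising in the integral remainder. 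The resulting expansion is formally identical to~\eqref{eq:taylorexp} after replacing every occurrence of $\bsy$ on the right-hand side by $\bsxi(\bsy)$; in particular, each tail monomial $\bsy^\bsnu$ becomes $\prod_{j>s}\xi(y_j)^{\nu_j}$.

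Integrating this identity against $\bsmu$ and invoking Fubini together with Assumption~\ref{assumpA4}, every multi-index $\bsnu$ having $\nu_j=1$ for some $j>s$ drops out---exactly mirroring the vanishing of linear moments in $y_j$ used in the proof of Theorem~\ref{thm:1}. Assumption~\ref{assumpA5} then assumes the role of Assumption~\ref{assumpA3}: the surviving factors $\int_{-1/2}^{1/2}\xi(y_j)^{\nu_j}\,\mu(\mathrm dy_j)$ for $\nu_j\geq 2$ are bounded by $C_\xi$ in place of $C_\mu$. From this point the computation proceeds verbatim with that of Theorem~\ref{thm:1}---the Vandermonde convolution embedded in Lemma~\ref{lemma:regularity}, the multinomial theorem, and Stechkin's lemma---yielding the claimed rate $\mathcal O(s^{-\frac{1}{p}+\frac{1}{2}})$. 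The only residual subtlety is purely technical: verifying that the expansion center and all intermediate points entering the integral remainder lie inside $U$, so that Assumption~\ref{assumpA2} and hence Lemma~\ref{lemma:regularity} remain valid. This, however, is immediate from $\bsxi(U)\subseteq U$ and the convexity of $U$.
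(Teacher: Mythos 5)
Your proposal is correct and follows essentially the same route as the paper: substitute $\bsxi(\bsy)$ into the Taylor expansion~\eqref{eq:taylorexp} of $F^s$ (valid since $\xi(0)=0$ makes the expansion center a zero of $F^s$), let Assumption~\ref{assumpA4} annihilate the multi-indices with a component equal to one, and let Assumption~\ref{assumpA5} play the role of Assumption~\ref{assumpA3} with $C_\xi$ in place of $C_\mu$. If anything, you are slightly more explicit than the paper about why the remainder term is controlled (convexity of $U$ and $\bsxi(U)\subseteq U$ keeping the intermediate points, which are of the form $(\bsxi(\bsy)_{\leq s}, t\,\bsxi(\bsy)_{>s})$, inside the domain where Lemma~\ref{lemma:regularity} applies).
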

\begin{proof}We introduce $F_{\boldsymbol\xi}^s(\bsy):=\|g_{\boldsymbol\xi}(\bsy)-g_{\boldsymbol\xi,s}(\bsy)\|_X^2$ for $\bsy\in U$. By carrying out the change of variables $\bsy\leftarrow\boldsymbol\xi(\bsy)$ in~\eqref{eq:taylorexp}, we obtain
\begin{align*}
F_{\bsxi}^s(\bsy)&=\sum_{\ell=1}^k \sum_{\substack{|\bsnu|=\ell\\ \nu_j=0~\forall j\leq s}}\frac{\bsxi(\bsy)^{\bsnu}}{\bsnu!}\partial^{\bsnu}F^s(\bsxi(\bsy_{\leq s},\mathbf 0))\\
&\quad + \sum_{\substack{|\bsnu|=k+1\\ \nu_j=0~\forall j\leq s}}\frac{k+1}{\bsnu!}\bsxi(\bsy)^{\bsnu}\int_0^1(1-t)^k \partial^{\bsnu}F^s(\bsxi(\bsy_{\leq s},t\bsy_{>s}))\,{\rm d}t.
\end{align*}
Integrating the above formula on both sides over $\bsy\in U$ and utilizing Lemma~\ref{lemma:regularity} as well as Assumptions~\ref{assumpA4}--\ref{assumpA5}, we obtain---in complete analogy with the proof of Theorem~\ref{thm:1}---that
\begin{align*}
\int_U \|g_{\bsxi}(\bsy)-g_{\bsxi,s}(\bsy)\|_X^2\,\bsmu({\rm d}\bsy)&\leq C_\xi^k\bigg(\max_{0\leq\ell\leq k}\frac{2\Theta_\ell}{\ell!}\bigg)^2(k+1)!\sum_{\ell=2}^k \sum_{\substack{|\bsnu|=\ell\\ \nu_j=0~\forall j\leq s\\ \nu_j\neq 1~\forall j>s}}\bsb^{\bsnu}\\
&\quad + C_\xi^{k+1}\bigg(\max_{0\leq\ell\leq k+1}\frac{2\Theta_\ell}{\ell!}\bigg)^2(k+2)!\sum_{\substack{|\bsnu|=k+1\\ \nu_j=0~\forall j\leq s}}\frac{1}{\bsnu!}\bsb^{\bsnu}.
\end{align*}
The desired result follows by exactly the same argument as in the proof of Theorem~\ref{thm:1}.
\end{proof}

As an application, with $U:=[-\tfrac12,\tfrac12]^{\mathbb N}$, let $\bsxi\!:U\to U$ satisfy the Assumptions~\ref{assumpA4} and~\ref{assumpA5}, let $D\subset \mathbb R^d$, $d\in\{1,2,3\}$, be a bounded Lipschitz domain, and let $f\!:D\to \mathbb R$ be a fixed source term. Consider the parametric PDE problem
\begin{align}
\begin{cases}
-\nabla\cdot(a_{\bsxi}(\bsx,\bsy)\nabla u_{\bsxi}(\bsx,\bsy))=f(\bsx),&\bsx\in D,~\bsy\in U,\\
u_{\bsxi}(\bsx,\bsy)=0,&\bsx\in\partial D,~\bsy\in U,
\end{cases}\label{eq:xipde}
\end{align}
endowed with the $\bsxi$-transformed diffusion coefficient
$$
a_{\bsxi}(\bsx,\bsy):=a_0(\bsx)+\sum_{j\geq 1} \xi(y_j)\psi_j(\bsx),\quad \bsx\in D,~\bsy\in U,
$$
which is assumed to satisfy the following:
\begin{enumerate}
    \item[6.] There exist $a_{\min},a_{\max}>0$ such that $0<a_{\min}\leq a_{\bsxi}(\bsx,\bsy)\leq a_{\max}<\infty$ for all $\bsx\in D$ and $\bsy\in U$.
    \item[7.] $a_0\in L^\infty(D)$ and $\psi_j\in L^\infty(D)$ for all $j\in\mathbb N$.
    \item[8.] $\sum_{j\geq 1} \|\psi_j\|_{L^\infty(D)}^p<\infty$ for some $p\in(0,1)$.
\end{enumerate}

In this case, the transformation $\bsxi(\bsy):=(\frac{1}{\sqrt 6}\sin(2\pi y_j))_{j\geq 1}$ corresponds to the so-called \emph{periodic model} studied in~\cite{hhkks22,kkkns22,kks20} when we define $\bsmu({\rm d}\bsy)={\rm d}\bsy$, i.e., the uniform probability measure. It is not difficult to see that $u_{\bsxi}$ is related to the solution of the problem
$$
\begin{cases}
-\nabla\cdot (a(\bsx,\bsy)\nabla u(\bsx,\bsy))=f(\bsx),&\bsx\in D,~\bsy\in U,\\
u(\bsx,\bsy)=0,&\bsx\in\partial D,~\bsy\in U,
\end{cases}
$$
subject to
$$
a(\bsx,\bsy)=a_0(\bsx)+\sum_{j=1}^\infty y_j\psi_j(\bsx),\quad \bsx\in D,~\bsy\in U,
$$
via the transformation $u_\bsxi(\bsx,\bsy)=u(\bsx,\bsxi(\bsy))$. Let $X:=H_0^1(D)$ be equipped with the norm $\|v\|_{X}:=\int_D \|\nabla v(\bsx)\|_{\mathbb R^d}^2\,{\rm d}\bsx$. The mapping $\bsy\mapsto u(\cdot,\bsy)\in X$ satisfies Assumptions~\ref{assumpA1}--\ref{assumpA3}: especially, there holds
\begin{align}
\|\partial^{\bsnu}u(\cdot,\bsy)\|_{X}\leq \frac{\|f\|_{X'}}{a_{\min}}|\bsnu|!\bsb^{\bsnu}\quad\text{for all}~\bsy\in U,~\bsnu\in\mathscr F,\label{eq:affineregularitybound}
\end{align}
where $\bsb:=(b_j)_{j\geq 1}$ is defined by setting $b_j:=\frac{\|\psi_j\|_{L^\infty(D)}}{ a_{\min}}$ for all $j\geq 1$ and $f\in X'=H^{-1}(D)$, the topological dual space of $X$. Meanwhile, the transformation $\bsxi$ satisfies Assumptions
\ref{assumpA4}--\ref{assumpA5}. Therefore %
 Corollary~\ref{cor:1} can be used to deduce that
\begin{align}
\|u_{\bsxi}-u_{\bsxi,s}\|_{L^2_{\bsmu}(U;X)}=\mathcal O(s^{-\frac{1}{p}+\frac12}),\label{eq:dimtruncratepde}
\end{align}
where the constant is independent of the dimension $s$. The rate~\eqref{eq:dimtruncratepde} was obtained in~\cite{kkkns22} using a highly technical Neumann series approach, which is heavily dependent on the fact that the quantity of interest can be written as the solution to an affine parametric operator equation. Meanwhile, our approach does not require the problem to have an affine structure, since only some information about the behavior of the partial derivatives is needed. Especially in studies of PDE uncertainty quantification, such bounds are derived as a byproduct of quasi-Monte Carlo analysis, so the dimension truncation rate can be obtained ``for free'' without further analysis.

Moreover, if $X_h$ is a conforming finite element subspace of $X$, $u_{\bsxi,h}(\cdot,\bsy)\in X_h$ denotes the finite element discretization of $u_{\bsxi}(\cdot,\bsy)\in X$ for all $\bsy\in U$, and $u_{\bsxi,h,s}(\cdot,\bsy)\in X_h$ denotes the dimension truncation of $u_{\bsxi,h}(\cdot,\bsy)$ for all $\bsy\in U$, then the finite element solution $u_{\bsxi,h}$ satisfies the same parametric regularity bound as $u_{\bsxi}$, implying that
$$
\|u_{\bsxi,h}-u_{\bsxi,h,s}\|_{L^2_{\bsmu}(U;X)}=O(s^{-\frac{1}{p}+\frac12}),
$$
independently of $s$.

Finally, we present an example illustrating how our results can be applied to nonlinear quantities of interest.

\emph{Example.} Let $X:=H_0^1(D)$ as above. Consider the nonlinear quantity of interest
\begin{align}
G_{\mathrm{nl}}(v):=\|v\|_{X}^2:=\int_D \|\nabla v(\bsx)\|_{\mathbb R^d}^2\,{\rm d}\bsx,\quad v\in X.\label{eq:nonlinearqoi}
\end{align}
If $u(\cdot,\bsy)\in X$ is the solution to~\eqref{eq:xipde} with $U=[-\tfrac12,\tfrac12]^{\mathbb N}$, $\bsmu({\rm d}\bsy):={\rm d}\bsy$, and $\bsxi(\bsy):=\bsy$, then it is known to satisfy Assumptions~\ref{assumpA1}--\ref{assumpA3} with the regularity bound~\eqref{eq:affineregularitybound}. Letting $C:=\frac{\|f\|_{X'}}{a_{\min}}$, we obtain by the Leibniz product rule that
\begin{align*}
\partial^{\bsnu}G_{\mathrm{nl}}
(u(\cdot,\bsy))&=\int_D\sum_{\bsm\leq \bsnu} \binom{\bsnu}{\bsm} \nabla\partial^{\bsm}u(\bsx,\bsy)\cdot \nabla \partial^{\bsnu-\bsm}u(\bsx,\bsy)\,{\rm d}\bsx\\
&\leq \sum_{\bsm\leq \bsnu}\binom{\bsnu}{\bsm}\|\partial^{\bsm}u(\cdot,\bsy)\|_{X}\|\partial^{\bsnu-\bsm}u(\cdot,\bsy)\|_{X}\\
&\leq C^2\sum_{\bsm\leq \bsnu}\binom{\bsnu}{\bsm}|\bsm|!\bsb^{\bsm}|\bsnu-\bsm|!\bsb^{\bsnu-\bsm}\\
&=C^2\bsb^{\bsnu}\sum_{\ell=0}^{|\bsnu|}\ell!(|\bsnu|-\ell)!\sum_{\substack{\bsm\leq \bsnu\\|\bsm|=\ell}}\binom{\bsnu}{\bsm}\\
&=C^2\bsb^{\bsnu}(|\bsnu|+1)!,
\end{align*}
where we used the Vandermonde convolution $\sum_{\substack{|\bsm|=\ell\\ \bsm\leq \bsnu}}\binom{\bsnu}{\bsm}=\binom{|\bsnu|}{\ell}=\frac{|\bsnu|!}{\ell!(|\bsnu|-\ell)!}$.

It follows from Theorem~\ref{thm:1} that
$$
\|G_{\mathrm{nl}}(u)-G_{\mathrm{nl}}(u_s)\|_{L_{\bsmu}^2(U;\mathbb R)}=O(s^{-\frac{1}{p}+\frac12}),
$$
independently of $s$. Moreover, for any $\bsxi\!:U\to U$ satisfying Assumptions \ref{assumpA4}--\ref{assumpA5}, it follows from Corollary~\ref{cor:1} that
$$
\|G_{\mathrm{nl}}(u_{\bsxi})-G_{\mathrm{nl}}(u_{\bsxi,s})\|_{L_{\bsmu}^2(U;\mathbb R)}=O(s^{-\frac{1}{p}+\frac12}),
$$
independently of $s$. We note that this especially holds for the periodic transformation $\bsxi(\bsy):=(\frac{1}{\sqrt 6}\sin(2\pi y_j))_{j\geq 1}$.

\section{Numerical experiments}
Let $D=(0,1)^2$ be a spatial domain, $U=[-\tfrac12,\tfrac12]^{\mathbb N}$, and $f(\bsx):=x_1$ a fixed source term. We let $\bsxi\!:U\to U$, $\bsxi(\bsy)=(\frac{1}{\sqrt 6}\sin(2\pi y_j))_{j\geq 1}$, and $\bsmu({\rm d}\bsy)={\rm d}\bsy$ denotes the uniform probability measure. We consider the PDE problem
\begin{align}
\begin{cases}
-\nabla\cdot(a_{\bsxi}(\bsx,\bsy)\nabla u_{\bsxi}(\bsx,\bsy))=f(\bsx),&\bsx\in D,~\bsy\in U,\\
u_{\bsxi}(\bsx,\bsy)=0,&\bsx\in\partial D,~\bsy\in U,
\end{cases}\label{eq:pdenumex}
\end{align}
equipped with the diffusion coefficient
\begin{align*}
a_{\bsxi}(\bsx,\bsy)=\frac32+\sum_{j\geq 1}\xi(y_j)j^{-\vartheta}\sin(j\pi x_1)\sin(j\pi x_2),\quad \bsx\in D,~\bsy\in U,~\vartheta>1.
\end{align*}
The PDE~\eqref{eq:pdenumex} is spatially discretized using a first-order conforming finite element method with mesh size $h=2^{-5}$.

We consider the dimension truncation error for the full PDE solution using the formula
\begin{align*}
\|u_{\bsxi}-u_{\bsxi,s}\|_{L^2(U;L^2(D))}\approx\bigg(\int_{[-\frac12,\frac12]^{s'}}\|u_{\bsxi,s'}(\cdot,\bsy)-u_{\bsxi,s}(\cdot,\bsy)\|_{L^2(D)}^2\,{\rm d}\bsy\bigg)^{\frac12},%
\end{align*}
and we also consider the nonlinear quantity of interest~\eqref{eq:nonlinearqoi}, estimating the dimension truncation error using the formula
\begin{align*}
\|G_{\rm nl}(u_\bsxi)-G_{\rm nl}(u_{\bsxi,s})\|_{L^2(U)}\approx \bigg(\int_{[-\frac12,\frac12]^{s'}}|G_{\rm nl}(u_{\bsxi,s'}(\cdot,\bsy))-G_{\rm nl}(u_{\bsxi,s}(\cdot,\bsy))|^2\,{\rm d}\bsy\bigg)^{\frac12}.%
\end{align*}
In both cases, we choose $s'\gg s$ and the high-dimensional integrals are approximated using a randomly shifted rank-1 lattice rule with $2^{20}$ cubature nodes and a single random shift. As the integration lattice, we use in both cases an off-the-shelf rank-1 lattice rule~\cite[lattice-39101-1024-1048576.3600]{KuoLattice} and use the same random shift for each value of $\vartheta$. As the reference solution, we use the PDE solution corresponding to $s'=2^{11}$.

The numerical results for dimensions $s\in\{2^k:k=1,\ldots,9\}$ and decay rates $\vartheta\in\{1.5,2.0,3.0\}$ corresponding to the full PDE solution and the nonlinear quantity of interest are displayed in Figures~\ref{fig:1} and~\ref{fig:2}, respectively. The theoretical convergence rates in each case are $-1.0$, $-1.5$, and $-2.5$, respectively, and they are displayed alongside the numerical results.

\begin{figure}[!t]
    \centering
    \includegraphics[width=.8\textwidth]{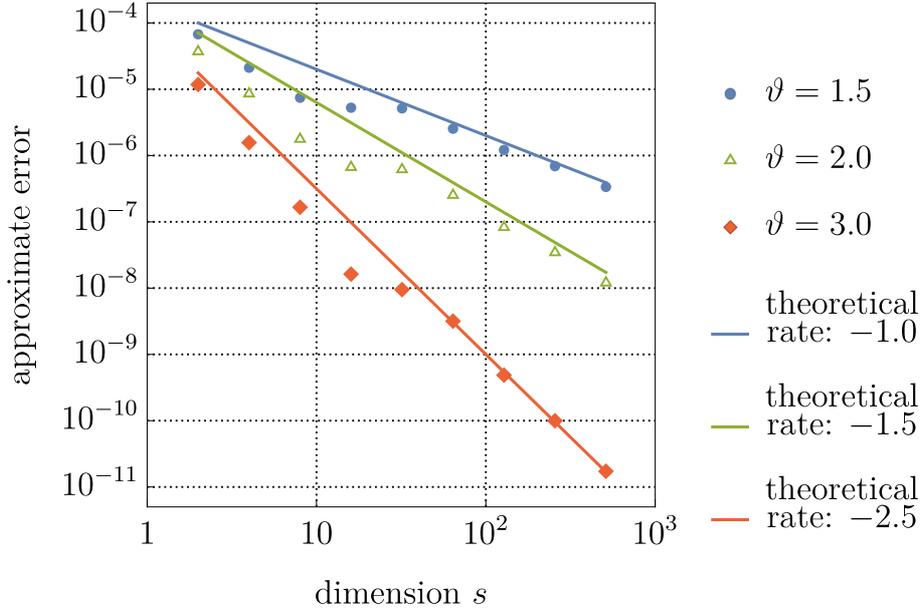}
    \caption{The dimension truncation errors of the full PDE solution corresponding to a periodically parameterized input random field with decay parameters $\vartheta\in\{1.5,2.0,3.0\}$. The expected dimension truncation error rates are $-1.0$, $-1.5$, and $-2.5$, respectively.}
    \label{fig:1}
\end{figure}

\begin{figure}
    \centering
    \includegraphics[width=.8\textwidth]{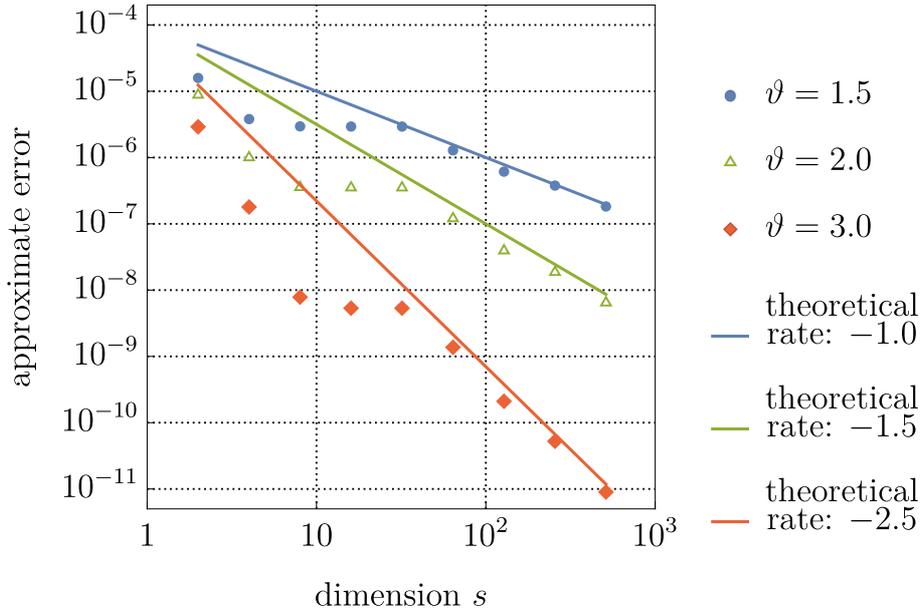}
    \caption{The dimension truncation errors of the nonlinear quantity of interest corresponding to a periodically parameterized input random field with decay parameters $\vartheta\in\{1.5,2.0,3.0\}$. The expected dimension truncation error rates are $-1.0$, $-1.5$, and $-2.5$, respectively.}
    \label{fig:2}
\end{figure}

The convergence graphs corresponding to the full PDE solution in Figure~\ref{fig:1} plateau between $10\leq s\leq 100$, which may be explained by the contributions of the finite element discretization error as well as the use of an ``off-the-shelf'' lattice rule (in contrast to a ``tailored'' lattice rule). This behavior appears to be exacerbated in the convergence graphs corresponding to the nonlinear quantity of interest in Figure~\ref{fig:2}. Nonetheless, in all cases the theoretically anticipated convergence rates are easily observed in practice. We remark that the convergence graphs corresponding to the affine and uniform model with $\boldsymbol\xi(\bsy):=(y_j)_{j\geq 1}$ are extremely similar to the results corresponding to the periodic model, and have thus been omitted.

\section{Conclusions}
Unlike many studies which have considered the dimension truncation error rate within the context of high-dimensional numerical integration, we considered the $L^2$ dimension truncation error rate for parametric Hilbert space valued functions. Our theory covers both affine parametric as well as non-affine parametric problems with sufficiently smooth dependence on a sequence of bounded, parametric variables. The main dimension truncation results presented in this work can be applied to nonlinear quantities of interest of parametric model problems, provided that they satisfy the conditions of our framework. In addition, the Hilbert space can be chosen to be a finite element subspace, indicating that our dimension truncation results are also valid for conforming finite element approximations of parametric PDEs.

The $L^2$ dimension truncation error rates considered in this work arise, e.g., in the study of high-dimensional function approximation of parametric PDEs. An example of such an approximation scheme is the kernel method over lattice point sets considered in~\cite{kkkns22}. The kernel method was analyzed in the context of the so-called \emph{periodic model}, in which a countable number of independent random variables enter the input random field of the PDE as periodic functions. Our second main result shows that the $L^2$ dimension truncation error rate remains invariant under certain transformations of the parametric variables: especially, the $L^2$ dimension truncation rate considered in this work holds for periodically parametrized model problems such as those studied in~\cite{hhkks22,kkkns22,kks20}.

Some potential extensions of this work include analysis in $L^q$ spaces for $1\leq q\leq \infty$, which would require a modification of Lemma~\ref{lemma:regularity}, as well as unbounded parameter domains such as $U=\mathbb R^{\mathbb N}$, which would require an extension of the proof developed in~\cite{gk22} to the function approximation setting.

%

%
%
%

\end{document}